\documentclass[11pt]{amsart}

\usepackage{enumerate,url,amssymb, mathrsfs}
\usepackage{graphicx}

\newtheorem{theorem}{Theorem}[section]
\newtheorem{lemma}[theorem]{Lemma}
\newtheorem{proposition}[theorem]{Proposition}
\newtheorem*{theorema}{Theorem A}

\theoremstyle{definition}
\newtheorem{definition}[theorem]{Definition}

\newtheorem{conjecture}[theorem]{Conjecture}
\newtheorem{question}[theorem]{Question}
\newtheorem{problem}[theorem]{Problem}

\theoremstyle{remark}
\newtheorem{remark}[theorem]{Remark}

\numberwithin{equation}{section}

\newcommand{\abs}[1]{\lvert#1\rvert}

\newcommand{\C}{\mathbb{C}}
\newcommand{\DD}{\mathbb{D}}
\newcommand{\GR}{\mathcal{G}}
\newcommand{\HH}{\mathbb{H}}

\newcommand{\R}{\mathbb{R}}
\newcommand{\T}{\mathbb{T}}
\newcommand{\TR}{\mathcal{T}}
\newcommand{\Z}{\mathbb{Z}}

\newcommand{\D}{\partial}

\newcommand{\Ha}{\mathcal{H}}
\newcommand{\onto}{\overset{\textnormal{\tiny{onto}}}{\longrightarrow}}
\newcommand{\he}{\overset{{}_\thicksim}{\hookrightarrow}}
\DeclareMathOperator{\cp}{Cap}
\DeclareMathOperator{\diam}{diam}
\DeclareMathOperator{\dist}{dist}
\DeclareMathOperator{\re}{Re}
\DeclareMathOperator{\Mod}{Mod}
\DeclareMathOperator{\im}{Im}

\def\XXint#1#2#3{{\setbox0=\hbox{$#1{#2#3}{\int}$}
\vcenter{\hbox{$#2#3$}}\kern-.5\wd0}}

\def\le{\leqslant}
\def\ge{\geqslant}

\begin{document}

\title[Harmonic mapping problem]{Harmonic mapping problem \\ and affine capacity}

\author{Tadeusz Iwaniec}
\address{Department of Mathematics, Syracuse University, Syracuse,
NY 13244, USA and Department of Mathematics and Statistics,
University of Helsinki, Finland}
\email{tiwaniec@syr.edu}
\thanks{Iwaniec was supported by the NSF grant DMS-0800416 and the Academy of
Finland grant 1128331.}

\author{Leonid V. Kovalev}
\address{Department of Mathematics, Syracuse University, Syracuse,
NY 13244, USA}
\email{lvkovale@syr.edu}
\thanks{Kovalev was supported by the NSF grant DMS-0913474.}

\author{Jani Onninen}
\address{Department of Mathematics, Syracuse University, Syracuse,
NY 13244, USA}
\email{jkonnine@syr.edu}
\thanks{Onninen was supported by the NSF grant DMS-0701059.}

\subjclass[2000]{Primary 31A05; Secondary 58E20, 30C20}

\date{January 12, 2010}

\keywords{Harmonic mapping problem, conformal modulus, affine capacity}

\begin{abstract}
The Harmonic Mapping Problem asks when there exists a harmonic homeomorphism between two given domains.
It arises in the theory of minimal surfaces and in calculus of variations, specifically in hyperelasticity theory.
We investigate this problem for doubly connected domains in the plane, where it already presents considerable challenge and leads to
several interesting open questions.
\end{abstract}

\maketitle

\section{Introduction}\label{intro}

By virtue of Riemann Mapping Theorem for every pair $(\Omega,\Omega^*)$ of simply connected domains in the complex plane one can find
a conformal mapping $h\colon \Omega\onto \Omega^*$ except for two cases: $\Omega\varsubsetneq\C=\Omega^*$ or $\Omega^*\varsubsetneq\C=\Omega$.
The situation is quite different for doubly connected domains.

A domain $\Omega\subset \C$ is doubly connected if $\widehat\C\setminus \Omega$ consists of two connected
components; that is, continua in the Riemann sphere $\widehat\C$. We say that $\Omega$ is nondegenerate
if both components contain more than one point. Every nondegenerate doubly connected domain can be conformally mapped onto an annulus
\begin{equation*}
A(r,R)=\{z\colon r<\abs{z}<R\},\qquad 0<r<R<\infty
\end{equation*}
where the ratio $R/r$ does not depend on the choice of the conformal mapping. This gives rise to the notion of conformal modulus,
\begin{equation}\label{modulus}
\Mod\Omega=\log\frac{R}{r}.
\end{equation}
In fact, modulus is the only conformal invariant for nondegenerate doubly connected domains.  Let us set $\Mod\Omega=\infty$
for the degenerate cases.

Complex harmonic functions, whose real and imaginary parts need not be coupled by the Cauchy-Riemann system, provide significantly larger
class of mappings, but still restrictions on the domains are necessary.
The studies of the Harmonic Mapping Problem began with Rad\'o's theorem~\cite{Ra} which states that there is no harmonic homeomorphism
$h\colon\Omega\onto \C$ for any proper domain $\Omega\varsubsetneq\C$.
The first proof of Rad\'o's theorem in this form was given by Bers~\cite{Be}; both Rad\'o and Bers were motivated by the celebrated
Bernstein's theorem: \emph{any global solution} (in the entire plane) \emph{of the minimal surface equation is affine}.

Harmonic Mapping Problem for doubly connected domains originated from the work of J.C.C. Nitsche on minimal surfaces. In~1962
he formulated a conjecture~\cite{Ni} which was recently proved by the present authors~\cite{IKO2}.

\begin{theorema}
A harmonic homeomorphism $h\colon A(r,R)\to A(r^*,R^*)$  between circular annuli exists
if and only if
\begin{equation}\label{nitbound}
\frac{R^*}{r^*}\ge \frac{1}{2}\left(\frac{R}{r}+\frac{r}{R}\right).
\end{equation}
\end{theorema}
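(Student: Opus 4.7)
The plan is to treat the two directions of the equivalence separately, with sufficiency being concrete and necessity substantially harder.

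For sufficiency I would try the two-parameter family of rotationally equivariant harmonic maps
\[
h(z) = az + \frac{b}{\bar z},\qquad a,b\in\R,
\]
each of which sends the circle $\{|z|=\rho\}$ to the circle of radius $|a\rho+b/\rho|$. The boundary conditions $ar+b/r=r^*$ and $aR+b/R=R^*$ determine $a,b$ uniquely, and the Jacobian
\[
J_h(z) = a^2 - \frac{b^2}{|z|^4}
\]
stays nonnegative on $A(r,R)$ exactly when $ar^2\ge|b|$. A direct computation should reduce this algebraic condition to~\eqref{nitbound}, proving sufficiency and identifying the extremal maps at equality.

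For necessity I would compare an arbitrary harmonic homeomorphism $h\colon A(r,R)\to A(r^*,R^*)$ with the radial ansatz via Fourier analysis on circles. Writing
\[
h(\rho e^{i\theta}) = \sum_{n\in\Z} c_n(\rho)\,e^{in\theta},
\]
harmonicity forces $c_n(\rho)=\alpha_n \rho^{|n|}+\beta_n\rho^{-|n|}$ for $n\ne 0$, so the first-mode projection $\tilde h(z)=\alpha_1 z+\beta_1/\bar z$ is of precisely the extremal form above. The aim is to show that the higher Fourier modes $|n|\ge 2$ can only force the target modulus to be \emph{larger}, so the Nitsche obstruction encoded in $\tilde h$ is the true obstruction for $h$ itself. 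To reach an inequality of that type I would combine, on each circle $|z|=\rho$, the isoperimetric bound $L(\rho)=\int_0^{2\pi}|h_\theta|\,d\theta\ge 2\pi r^*$ (valid because $h(\{|z|=\rho\})$ separates the two boundary components of the target) with the area identity $\int_{A(r,R)} J_h\, dA = \pi(R^{*2}-r^{*2})$, expand both in Fourier modes, apply Cauchy--Schwarz mode-by-mode, and integrate against the conformally invariant measure $d\rho/\rho$.

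The main obstacle will be showing that the higher Fourier modes truly cannot help. Because $h$ must be a \emph{homeomorphism}, so that $J_h>0$ pointwise and the boundary conditions $|h|=r^*,R^*$ hold pointwise rather than in mean, the different Fourier modes are nonlinearly coupled and one cannot freely project onto the first mode while preserving the constraints. This coupling is precisely what delayed the resolution of Nitsche's conjecture for nearly half a century, and the rigorous treatment in~\cite{IKO2} handles it through a free-boundary variational principle, showing that the extremal deformations are, up to rotation, exactly the radial maps constructed at the outset.
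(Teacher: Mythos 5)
The paper does not actually prove Theorem~A: it is quoted as a result established elsewhere (the authors' resolution of the Nitsche conjecture in~\cite{IKO2}), so there is no internal argument to compare yours against. Judged on its own terms, your proposal is half a proof. The sufficiency direction is correct and complete in outline: solving $ar+b/r=r^*$, $aR+b/R=R^*$ gives $a=\frac{R^*R-r^*r}{R^2-r^2}$, $b=\frac{rR(r^*R-R^*r)}{R^2-r^2}$, and the injectivity condition $ar^2\ge|b|$ does reduce exactly to $2rR^*R\ge r^*(r^2+R^2)$, i.e.\ to~\eqref{nitbound}; at equality the Jacobian vanishes only on the inner boundary circle, so the map is still a homeomorphism of the open annuli. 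This is the standard construction and it settles the ``if'' direction.

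The necessity direction, however, is a genuine gap, and the specific scheme you propose is known not to close it. Combining the isoperimetric lower bound $L(\rho)\ge 2\pi r^*$ with the area identity and Cauchy--Schwarz mode-by-mode is essentially the classical attack on the problem; it yields lower bounds for $R^*/r^*$ of the qualitatively right shape but strictly weaker than $\cosh\Mod\Omega$ (this is how the partial results predating~\cite{IKO2} were obtained). The reason is the one you yourself identify: the homeomorphism constraint couples the Fourier modes nonlinearly, and projecting onto the $n=\pm1$ modes destroys both the pointwise boundary conditions and the positivity of the Jacobian, so the extremality of the radial ansatz cannot be extracted this way. The actual proof in~\cite{IKO2} does not proceed by Fourier projection at all but by integrating carefully chosen null Lagrangians (``free Lagrangians'') over the annulus, which is a different mechanism. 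Your write-up is honest in flagging this and deferring to~\cite{IKO2} --- which is also all the present paper does --- but as a blind proof attempt the ``only if'' half remains unproven rather than merely unpolished.
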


It is a simple matter to see that harmonic functions remain harmonic upon conformal change of the independent variable $z\in\Omega$.
Therefore, Theorem~A remains valid when the annulus $A(r,R)$ is replaced by any doubly connected domain $\Omega\subset\C$
of the same modulus $\log\frac{R}{r}$. The Nitsche bound~\eqref{nitbound} then reads as
\begin{equation}\label{nitbound2}
\frac{R^*}{r^*}\ge \frac{1}{2}\left(e^{\Mod\Omega}+e^{-\Mod\Omega}\right)=\cosh\Mod\Omega.
\end{equation}

The harmonicity of a mapping $h\colon\Omega\to\Omega^*$ is also preserved under affine transformations of the target. Thus it is natural
to investigate necessary and sufficient conditions for the existence of $h$ in terms of the conformal modulus of $\Omega$
and an affine invariant of the target $\Omega^*$. This leads us to the concept of {affine modulus}.

A $\C$-affine automorphism of $\C$ is a mapping of the form $z\mapsto az+c$ with $a,c\in \C$, $a\ne 0$.
An $\R$-affine automorphism of $\C$, or simply affine transformation, takes the form $z\mapsto a z+ b \bar z+c$ with
determinant $\abs{a}^2-\abs{b}^2\ne 0$.

\begin{definition}\label{afmod}
The \emph{affine modulus} of a doubly connected domain $\Omega\subset\C$ is defined by
\begin{equation}\label{afmod1}
\Mod_@\Omega=\sup\{\Mod \phi(\Omega); \quad \phi\colon\C\onto\C \text{ affine}\}.
\end{equation}
\end{definition}
For an equivalent formulation and properties of the affine modulus see Section~\ref{affinesection}.

We can now state our main results: a necessary condition (Theorem~\ref{necessary}) and sufficient condition (Theorem~\ref{sufficient}) for
the existence of a harmonic homeomorphism $h\colon \Omega\onto\Omega^*$.

\begin{theorem}\label{necessary}
If $h\colon \Omega\to\Omega^*$ is a harmonic homeomorphism between doubly connected domains, and $\Omega$ is nondegenerate, then
\begin{equation}\label{affinepsi1}
\frac{\Mod_@ \Omega^*}{\Mod\Omega} \ge \Phi(\Mod \Omega)
\end{equation}
where $\Phi\colon (0,\infty)\to(0,1)$ is an increasing function such that
$\Phi(\tau)\to 1$ as $\tau\to\infty$. More specifically,
\begin{equation}\label{specific}
\Phi(\tau)= \lambda\left(\coth \frac{\pi^2}{2\tau}\right),\quad
{\rm where } \ \lambda(t)\ge \frac{\log t-\log(1+\log t)}{2+\log t}, \ t\ge 1.
\end{equation}
\end{theorem}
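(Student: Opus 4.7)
The plan is to exploit the conformal invariance of $\Mod$ (to normalize the source) and the affine freedom in the definition of $\Mod_@$ (to choose a convenient target). After reducing to $\Omega=A(r,R)$ with $\tau=\Mod\Omega=\log(R/r)$, it suffices by \eqref{afmod1} to produce a single $\R$-affine map $\phi(w)=\alpha w+\beta\bar w+\gamma$ such that $\Mod\phi(\Omega^*)\ge\Phi(\tau)\,\tau$. For every such $\phi$ the composition $\tilde h=\phi\circ h\colon A(r,R)\onto\phi(\Omega^*)$ remains a harmonic homeomorphism, so the four real parameters encoded in $(\alpha,\beta)$ are at our disposal.

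The lower bound for $\Mod\phi(\Omega^*)$ would come from the length–area method. For each $\rho\in(r,R)$, the image $\tilde h(\{|z|=\rho\})$ is a Jordan curve separating the two boundary continua of $\phi(\Omega^*)$. A Rengel-type inequality therefore yields
\begin{equation*}
\Mod\phi(\Omega^*)\ge\int_r^R\frac{d\rho}{\rho\,\mathcal E(\log\rho)},\qquad \mathcal E(s):=\frac{1}{2\pi}\int_0^{2\pi}\bigl|\tilde h_t(e^{s+it})\bigr|^2\,dt,
\end{equation*}
up to standard normalization constants. Writing the harmonic map as $h=F+\overline G$ with holomorphic $F,G$ on $A(r,R)$ and expanding each in Laurent series, Parseval's identity transforms $\mathcal E(s)$ into an explicit linear combination of exponentials $e^{\pm 2ns}$ whose coefficients depend quadratically on $(\alpha,\beta)$.

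The heart of the argument is then the selection of $(\alpha,\beta)$ so as to simultaneously preserve the unit winding of $\tilde h$ around the puncture and suppress its dominant angular Fourier modes. This constrained quadratic optimization can be identified with the extremal problem for a Grötzsch ring; the classical duality $\mu(r)\mu(\sqrt{1-r^2})=\pi^2/4$ for the modular function $\mu(r)=\pi K'(r)/(2K(r))$ is what converts the variable $\tau$ into $\pi^2/(2\tau)$ inside the $\coth$, and the function $\lambda$ emerges as the suitably normalized inverse of $\mu$. The elementary bound \eqref{specific} for $\lambda(t)$ then follows from the standard asymptotic $K(r)\sim\log(4/\sqrt{1-r^2})$ as $r\to 1^-$.

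The main obstacle is this final optimization in $(\alpha,\beta)$. A purely $\C$-affine map would suffice to normalize only the holomorphic principal coefficient $a_1$ of $F$, and would yield a crude constant lower bound for $\Mod_@\Omega^*/\Mod\Omega$. The full $\R$-affine freedom is essential: only with four real parameters can one simultaneously suppress the dominant modes of both $F$ and $G$, and only this joint suppression produces the Grötzsch-type estimate responsible for $\Phi(\tau)\to 1$. Converting the extremal value of the quadratic minimization into the explicit function $\Phi$ requires identifying the correct capacity/modulus function and is the most delicate step.
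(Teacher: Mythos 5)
There is a genuine gap: the entire quantitative content of the theorem --- the choice of the affine parameters, the identification of the extremal configuration, and the derivation of the explicit function $\Phi(\tau)=\lambda\bigl(\coth \frac{\pi^2}{2\tau}\bigr)$ --- is deferred to a ``constrained quadratic optimization'' that you describe only by analogy (``can be identified with the extremal problem for a Gr\"otzsch ring'') and that you yourself flag as the unresolved ``most delicate step.'' An appeal to the duality $\mu(r)\mu(\sqrt{1-r^2})=\pi^2/4$ does not by itself convert a Parseval expansion of $\mathcal E(s)$ into the stated bound: you would still need to exhibit the minimizing $(\alpha,\beta)$, show that the resulting quadratic form in the Laurent coefficients dominates the relevant capacity, and explain how the winding-number constraint survives the suppression of the ``dominant modes.'' None of this is carried out, so the proposal establishes neither \eqref{affinepsi1} nor the specific form \eqref{specific}; the guessed provenance of $\pi^2/(2\tau)$ via the modular function is, as it turns out, not where that quantity comes from.

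For contrast, the paper's proof avoids Fourier analysis of the target entirely. Normalize $\Omega=A(R^{-1},R)$ and replace $h$ by $H=h-\varkappa\bar h$ with $\varkappa=\nu(1)$, where $\nu$ is the antiholomorphic second complex dilatation of $h$; note this is a two-real-parameter normalization, not a four-parameter one (the remaining $\C$-affine freedom does not change $\Mod$ of the image). Since $\nu$ maps the annulus into $\DD$ and now vanishes at $1$, the Schwarz lemma driven by a lower bound for Green's function of the annulus --- obtained by subordination from the strip, which is the actual source of $\coth\frac{\pi^2}{2\tau}$ --- combined with Hadamard's three circles theorem gives $\abs{\nu}\le k^{1-\alpha}$ on $A(R^{-\alpha},R^{\alpha})$ with $k=\tanh\frac{\pi^2}{4\log R}$. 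Thus $H$ is $K$-quasiconformal on that subannulus with $K=\frac{1+k^{1-\alpha}}{1-k^{1-\alpha}}$, quasiconformal distortion of modulus yields $\Mod_@ h(\Omega)\ge \alpha\,K^{-1}\Mod\Omega$, and optimizing over $\alpha$ produces $\lambda$. Your length--area framework is not inherently wrong, but as written it stops exactly where the proof has to begin.
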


\begin{conjecture}\label{genNit} Based upon the Nitsche bound~\eqref{nitbound2} it seems reasonable to
expect that
\[
\Phi(\tau)=\frac{1}{\tau}\log\cosh\tau,\qquad 0<\tau<\infty
\]
but we have no proof of this.
\end{conjecture}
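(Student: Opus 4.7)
The plan is to establish the conjectured sharp identity $\Phi(\tau)=\tau^{-1}\log\cosh\tau$, equivalently $\Mod_@\Omega^*\ge\log\cosh(\Mod\Omega)$, by upgrading the proof of Theorem~A from annulus-to-annulus maps to arbitrary doubly connected targets via an optimal affine normalization. Using conformal invariance I first reduce to $\Omega=A(r,R)$ with $\tau=\log(R/r)$; the task then becomes to produce an $\R$-affine automorphism $\phi\colon\C\onto\C$ such that $\Mod\phi(\Omega^*)\ge\log\cosh\tau$. When $\Omega^*$ is itself (affinely equivalent to) an annulus, Theorem~A together with the trivial inequality $\Mod_@\Omega^*\ge\Mod\Omega^*$ already gives the estimate, so the content of the conjecture lies entirely in targets whose two boundary components are not affinely concentric ellipses.

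The approach uses the canonical decomposition $h=f+\bar g$ with $f,g$ holomorphic on $A(r,R)$. For every affine $\phi(w)=aw+b\bar w$ with $|a|>|b|$ the composition $\phi\circ h=(af+bg)+\overline{(\bar b f+\bar a g)}$ is again a harmonic homeomorphism onto $\phi(\Omega^*)$, so the problem reduces to selecting the right affine parameters $(a,b)$ (up to translation and conformal rescaling) so that $\phi(\Omega^*)$ contains a round annulus of modulus at least $\log\cosh\tau$. One strategy is to pick $(a,b)$ so as to minimize the mean oscillation of the holomorphic part $F=af+bg$ on a well-chosen separating circle $|z|=t$, in the hope that this normalization renders some level curve of $\phi\circ h$ close enough to a round circle that Theorem~A can be invoked on a nested annulus inside $\phi(\Omega^*)$. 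An alternative is variational: take $\phi$ to be a critical point of $\Mod\phi(\Omega^*)$ on the affine group modulo conformal rescalings, and then combine the Euler--Lagrange conditions at the extremum with Theorem~A applied to the round annulus conformally equivalent to $\phi(\Omega^*)$.

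The main obstacle, and the reason the authors state this only as a conjecture, is that the sharp factor $\tfrac12$ in the Nitsche inequality \eqref{nitbound} is extracted from the extremely rigid extremal harmonic map $h_0(z)=\tfrac12(z+\alpha/\bar z)$ between round annuli. Once the target is not annular this extremal is no longer a competitor, and the sharp constant has to be recovered from an affine rearrangement principle --- roughly, that symmetrizing $h$ relative to the affine group can only decrease $\Mod_@\Omega^*$ --- for which no analogue of the classical conformal symmetrization seems to be available. The proof of Theorem~\ref{necessary} instead bounds $\Mod_@\Omega^*$ below through affine capacity together with an auxiliary conformal modulus estimate, and this two-step reduction inevitably loses the sharp Nitsche constant, yielding only the weaker function $\Phi$ in \eqref{specific}. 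A sensible intermediate problem is to verify the conjecture when $\Omega^*$ is the region between two ellipses with distinct centers or distinct eccentricities; already here the optimal affine normalization appears nontrivial, and a sharp answer in this six-parameter setting would be strong evidence for the general conjecture.
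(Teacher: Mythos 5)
The statement you are addressing is not a theorem of the paper but an open conjecture: the authors explicitly write that they have no proof, and nothing in Sections~\ref{necessarysection}--\ref{sufficientsection} establishes $\Phi(\tau)=\tau^{-1}\log\cosh\tau$; their argument only yields the weaker bound \eqref{specific}. Your text likewise does not constitute a proof. Its correct content is the easy reduction (conformal invariance in the source, so one must show $\Mod_@\Omega^*\ge\log\cosh\Mod\Omega$) and the observation that the case where $\Omega^*$ is an affine image of a circular annulus follows from Theorem~A; beyond that you only name two strategies and, candidly, the reason they do not close. Concretely: (i) choosing $(a,b)$ to minimize the mean oscillation of $af+bg$ on a separating circle gives no quantitative mechanism forcing a level set of $\phi\circ h$ to be round, and any passage to a nested round annulus inside $\phi(\Omega^*)$ loses modulus by monotonicity --- exactly the loss that destroys the sharp constant $\tfrac12$ in \eqref{nitbound}; moreover it would suffice to bound $\Mod\phi(\Omega^*)$ from below, and insisting that $\phi(\Omega^*)$ \emph{contain} a round annulus of modulus $\log\cosh\tau$ is an unnecessarily stronger demand. (ii) The variational route presupposes an extremal affine map, but the paper itself shows (Gr\"otzsch ring, discussion after \eqref{gtmod}) that the supremum in \eqref{afmod1} need not be attained, so ``take $\phi$ to be a critical point of $\Mod\phi(\Omega^*)$'' is not available in general, and even where it is, you give no link between the Euler--Lagrange conditions and the Nitsche-type inequality.

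So the verdict is that there is a genuine gap --- indeed the whole proof is missing: no inequality of the form $\Mod_@\Omega^*\ge\log\cosh\Mod\Omega$ is derived for any nontrivial class of targets beyond affine annuli, and the ``affine rearrangement principle'' you invoke is precisely the unproved ingredient. Your diagnosis of the obstruction (rigidity of the extremal map $\tfrac12\bigl(z+\alpha/\bar z\bigr)$ and the constant-losing two-step argument of Theorem~\ref{necessary}) is accurate and consistent with why the authors state this only as a conjecture, and your suggestion to test the elliptic-target case is a reasonable research direction, but none of this can be counted as a proof of the statement.
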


\begin{theorem}\label{sufficient}
Let $\Omega$ and $\Omega^*$ be  doubly connected domains in $\C$ such that
\begin{equation}\label{suff1}
\Mod_@\Omega^* > \Mod\Omega.
\end{equation}
Then there exists a harmonic homeomorphism $h\colon \Omega\to\Omega^*$ unless $\C\setminus\Omega^*$ is bounded.
In the latter case there is no such $h$.
\end{theorem}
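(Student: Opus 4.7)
The theorem splits into two parts: non-existence when $\C\setminus\Omega^*$ is bounded, and existence when $\C\setminus\Omega^*$ is unbounded.

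For the non-existence, I would suppose $\Omega^*=\C\setminus K$ with $K$ a bounded continuum, and assume for contradiction that $h\colon\Omega\to\Omega^*$ is a harmonic homeomorphism. Using conformal invariance of the source, reduce to $\Omega=A(r,R)$ and write $h=f+\bar g+A\log|z|$ with $f,g$ single-valued holomorphic on the annulus and $A\in\R$. The topological hypothesis that $h$ be a homeomorphism onto $\C\setminus K$ forces the winding number of $h$ around each point of $K$ to be $\pm1$, while $h$ must tend to $\infty$ on one of the two boundary circles. The plan is to analyze the dominant Laurent term of $f$ or $\bar g$ on the circles $|z|=\rho$ as $\rho$ approaches that boundary, and to derive a contradiction between the winding requirement and the modes of boundary blow-up that a harmonic function on an annulus can realize.

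For the existence, I would first reduce to $\Mod\Omega^*>\Mod\Omega$: the definition of $\Mod_@$ gives an $\R$-affine $\phi$ with $\Mod\phi(\Omega^*)>\Mod\Omega$, and because affine maps preserve harmonicity on the target side, producing a harmonic homeomorphism $\Omega\to\phi(\Omega^*)$ suffices (compose with $\phi^{-1}$). The unboundedness of $\C\setminus\Omega^*$ transfers to $\C\setminus\phi(\Omega^*)$ since $\phi$ is a homeomorphism of $\C$. By conformal invariance of the source, I further assume $\Omega=A(1,e^M)$ with $M=\Mod\Omega<M^*=\Mod\Omega^*$.

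The core of the proof is then constructing a harmonic homeomorphism $h\colon A(1,e^M)\to\Omega^*$. Fix a conformal parametrization $\chi\colon A(1,e^{M^*})\to\Omega^*$ and prescribe a sense-preserving boundary correspondence $\eta\colon\partial A(1,e^M)\to\partial\Omega^*$ by routing the inner (resp.\ outer) circle of $A(1,e^M)$ through $\chi$ to the bounded (resp.\ unbounded) boundary continuum of $\Omega^*$. Define $h$ as the Poisson extension of $\eta$, which is automatically harmonic. The main obstacle is showing that $h$ is injective: the classical Rad\'o--Kneser--Choquet theorem does not apply directly, since the target is doubly connected and its boundary components need not be convex, so the strict modulus gap $M^*>M$ must be leveraged. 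A plausible route is a continuity/degree argument: begin with the annular target $A(1,e^{M^*})$, where the explicit Nitsche harmonic homeomorphism from $A(1,e^M)$ is available, deform the target to $\Omega^*$ through a one-parameter family of doubly connected domains whose moduli stay above some $M'\in(M,M^*)$, and show that injectivity of the resulting Poisson extensions is preserved throughout the deformation.
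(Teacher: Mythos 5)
Both halves of your proposal contain genuine gaps. For the non-existence part (when $\C\setminus\Omega^*$ is bounded), your plan to "analyze the dominant Laurent term" and contrast it with the winding requirement is not an argument but a hope: there need not be a single dominant term, and nothing in a qualitative analysis of blow-up modes prevents a harmonic homeomorphism of $A(1,R)$ onto $\C\setminus K$. The paper's proof requires a quantitative input. After normalizing so that $h$ tends to $K$ as $\abs{z}\to 1$, one truncates the target by large disks $\DD_m$, reparametrizes the preimages conformally by annuli $A(1,R_m)$ with $R_m\nearrow R$, and rescales by $1/m$ to get harmonic maps $h_m$ that converge to $0$ uniformly on compact subsets of $A(R^{-1},1)$; hence every fixed Laurent coefficient of $h_m$ tends to $0$. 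But each $h_m$ extends to a sense-preserving homeomorphism of $\T$, and Weitsman's theorem (Theorem~\ref{weitsman}) forces $\abs{c_0}+\abs{c_1}\ge 2/\pi$ for such maps. Without the rescaling trick and the Hall--Weitsman lower bound on Fourier coefficients of circle homeomorphisms, the contradiction does not materialize.

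For the existence part, your Poisson-extension-plus-deformation scheme is not viable as stated: the Rad\'o--Kneser--Choquet mechanism genuinely fails for doubly connected targets (the Nitsche phenomenon is precisely a failure of injectivity of such extensions), the harmonic extension of boundary data need not even map onto $\Omega^*$, the conformal parametrization $\chi$ need not extend to the boundary, and you give no mechanism by which injectivity would persist along the deformation. You also omit the degenerate case $\Mod\Omega^*=\infty$ (e.g.\ $\Omega^*=G\setminus\{0\}$), where no finite-modulus annular parametrization exists. The paper's route is quite different and much more economical: after an affine change making $\Mod\phi(\Omega^*)>\Mod\Omega$, one shows (when $\C\setminus\Omega^*$ is not contained in a line) that $\inf_\phi \Mod\phi(\Omega^*)=0$ by an extremal length estimate under the stretchings $x+iy\mapsto Mx+iy$, so the intermediate value theorem produces $\phi$ with $\Mod\phi(\Omega^*)=\Mod\Omega$ \emph{exactly}, and the desired map is $\phi^{-1}$ composed with a conformal map --- no Poisson extension at all. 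The collinear-complement case (Teichm\"uller rings) is handled by an explicit shear $h(z)=u(z)+i\im z$ with $u=\re f_b$ for a conformal map onto a polygonal half-plane, injectivity following from $u_x>0$ via the maximum principle; and the degenerate target is handled by the explicit harmonic map $\zeta\mapsto\frac12(\zeta-t^2/\bar\zeta)$. None of these ideas appears in your proposal, so the existence half remains unproved.
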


When $\Mod\Omega\to\infty$, the comparison of inequalities~\eqref{affinepsi1} and~\eqref{suff1} shows that both
are asymptotically sharp. We do not know if equality in~\eqref{suff1} (with both sides finite) would suffice as well. This is discussed in
Remark~\ref{equalmod}.

We write $\Omega_1\he \Omega_2$ when $\Omega_1$ is contained in $\Omega_2$ in such a way that the inclusion
map $\Omega_1 \hookrightarrow \Omega_2$ is a homotopy equivalence. For doubly connected domains this
simply means that $\Omega_2^-\subset \Omega_1^-$ and $\Omega_1^+\subset \Omega_2^+$.
The monotonicity of modulus can be expressed by saying that $\Omega_1\he \Omega_2$ implies $\Mod\Omega_1\le \Mod\Omega_2$ and
 $\Mod_@\Omega_1\le \Mod_@\Omega_2$.

Observe that both conditions~\eqref{affinepsi1} and~\eqref{suff1} are preserved if $\Omega$ is replaced by a domain with a
smaller conformal modulus, or $\Omega^*$ is replaced by a domain with a greater affine modulus. Thus, Theorems~\ref{necessary}
and~\ref{sufficient} suggest the formulation of the following comparison principles.

\begin{problem}\label{compared} \textsc{(Domain Comparison Principle)}
Let $\Omega$ and $\Omega^*$ be doubly connected domains such that $\Omega$ is nondegenerate and
there exists a harmonic homeomorphism $h\colon\Omega\onto \Omega^*$. If $\Omega_\circ \he \Omega$, then
there exists a harmonic homeomorphism $h_\circ\colon \Omega_\circ\onto \Omega^*$.
\end{problem}

\begin{problem}\label{comparet} \textsc{(Target Comparison Principle)}
Let $\Omega$ and $\Omega^*$ be doubly connected domains such that
there exists a harmonic homeomorphism $h\colon\Omega\onto \Omega^*$. If $\Omega_\circ^*$ is nondegenerate and
$\Omega^* \he \Omega_\circ^*$, then there exists a harmonic homeomorphism $h_\circ\colon \Omega\onto \Omega_\circ^*$.
\end{problem}

Both Problems~\ref{compared} and~\ref{comparet} are open to the best of our knowledge. Although the Harmonic Mapping
Problem has its roots in the theory of minimal surfaces, it also arises from the minimization of the Dirichlet energy
\[
\mathcal E[f]= \iint_\Omega \abs{Df}^2
\]
among all homeomorphism $f\colon \Omega\onto\Omega^*$. The minimizers and stationary points of $\mathcal E$
serve as a model of admissible deformations of a hyperelastic material with stored energy $\mathcal E$~\cite{MHb}.
The existence of a harmonic homeomorphism $h\colon\Omega\onto \Omega^*$ is necessary for the minimum
of $\mathcal E[f]$ to be attained in the class of homeomorphisms between $\Omega$ and $\Omega^*$. Indeed, such
minimal maps must be harmonic.

The rest of the paper is organized as follows: we discuss relevant affine invariants in Section~\ref{affinesection},
prove Theorem~\ref{necessary} in Section~\ref{necessarysection}, and prove Theorem~\ref{sufficient}
in Section~\ref{sufficientsection}. We write $\DD_r=\{z\in\C\colon\abs{z}<r\}$, $\T_r=\partial \DD_r$, and $\T=\T_1$.

The Harmonic Mapping Problem for multiply connected domains has been also
studied in~\cite{BH93,BH94,BH99,Dub,DH97,HS85,Lyz01},
where the authors mostly focus on existence of harmonic mappings onto some domain of a given canonical type (such as disk with punctures)
rather than onto a specific domain.

\section{Affine capacity and modulus}\label{affinesection}

In this section $\Omega$ is a doubly connected domain in $\C$, possibly degenerate.
Exactly one of the components of $\C\setminus \Omega$ is bounded and is denoted $\Omega^-$.
We also write $\Omega^+=\Omega^-\cup\Omega$, which is a simply connected domain.
Let $\abs{E}$ denote the area (planar Lebesgue measure) of a set $E\subset \C$.
We are now ready to introduce the first affine invariant of $\Omega$.

\begin{definition}\label{carmod}
The \emph{Carleman modulus} of $\Omega$ is defined by the rule
\[
\Mod_{{}_\copyright}\Omega =\frac{1}{2}\log\frac{\abs{\Omega^+}}{\abs{\Omega^-}}
=\frac{1}{2}\log\left(1+\frac{\abs{\Omega}}{\abs{\Omega^-}}\right)
\]
unless $\abs{\Omega}=\infty$ or $\abs{\Omega^-}=0$, in which case $\Mod_{{}_\copyright}\Omega:=\infty$.
\end{definition}

The well-known inequality
\begin{equation}\label{carleman0}
\Mod\Omega \le \Mod_{{}_\copyright} \Omega,
\end{equation}
proved by T.~Carleman~\cite{Ca18} in~1918, can be  expressed by saying that among all doubly
connected domains with given areas of $\Omega^-$ and $\Omega^+$ the maximum of conformal modulus
is uniquely attained by the circular annulus $A(r,R)$,
$\pi r^2=\abs{\Omega^-}<\abs{\Omega^+}=\pi R^2$. Carleman's inequality was one of the earliest
isoperimetric-type results in mathematical physics, many of which can be found in~\cite{PSb}.

Our second affine invariant arises from an energy minimization problem. Recall the capacity of $\Omega$,
\begin{equation}\label{cap}
\cp \Omega=\inf_u \iint_{\C} \abs{\nabla u}^2
\end{equation}
where the infimum is taken over all real-valued smooth functions on $\C$ which assume precisely two values $0$ and $1$
on $\C\setminus\Omega$. The conformal modulus, defined in~\eqref{modulus}, is given by
\[
\Mod \Omega=\frac{2\pi}{\cp \Omega}.
\]

Since we are looking for an affine invariant of $\Omega$, the following variational problem is naturally introduced.
\begin{definition}\label{affmodulus}
Define the \emph{affine capacity} of $\Omega$ by
\[\cp_@ \Omega := \inf_{A,u} \frac{1}{\abs{\det A}}\iint_{\Omega} \abs{A\nabla u}^2\]
where the infimum is taken over all invertible matrices $A$ and over all real functions $u$ as in~\eqref{cap}. Thus,
the affine modulus of $\Omega$, defined in~\eqref{afmod1}, is
\[
\Mod_@ \Omega=\frac{2\pi}{\cp_@ \Omega}.
\]
\end{definition}

Let us now examine the properties of the affine modulus. From~\eqref{carleman0} we see that
\begin{equation}\label{carleman01}
\Mod\Omega \le \Mod_@\Omega \le \Mod_{{}_\copyright} \Omega.
\end{equation}
When $\Omega$ is a circular annulus, equality holds in~\eqref{carleman0} and therefore in~\eqref{carleman01}.
Hence $\Mod_@ A(r,R)=\log (R/r)$.

Equality $\Mod_@\Omega=\Mod\Omega$ is also attained, for example, if $\Omega$ is the \emph{Teichm\"uller ring}
\[\TR(s):=\C\setminus ([-1,0]\cup [s,+\infty)),\qquad s>0\]
Indeed, for any affine automorphism $\phi\colon\C\to\C$ there is a $\C$-affine automorphism $\psi\colon\C\to\C$
that agrees with $\phi$ on $\R$. Since $\phi(\TR(s))=\psi(\TR(s))$ and $\psi$ is conformal, it follows that
\begin{equation}\label{teichstays}
\Mod \phi(\TR(s)) = \Mod \TR(s).
\end{equation}

On the other extreme, $\Mod_@ \Omega$ may be infinite even when $\Mod \Omega$ is finite. Indeed, the domain
\[\Omega=\{z\in \C\colon \abs{\im z}<1\}\setminus [-1,1]\]
is affinely equivalent to $\{z\in \C\colon \abs{\im z}<1\}\setminus [-s,s]$ for any $s>0$.
The conformal modulus of the latter domain grows indefinitely as $s\to 0$.

Since the second inequality in~\eqref{carleman01} becomes vacuous when $\abs{\Omega}=\infty$ or $\abs{\Omega^-}=0$,
it is desirable to have an upper estimate for $\Mod_@\Omega$ in terms of other geometric properties of $\Omega$.
Recall that the \emph{width} of a compact set $E\subset \C$, denoted $w(E)$, is the smallest distance between
two parallel lines that enclose the set. For connected sets this is also the length of the shortest $1$-dimensional projection.

\begin{lemma}\label{finiteaffmod}
Let $\Omega$ be a nondegenerate doubly connected domain. If $w=w(\Omega^-)>0$, then
\begin{equation}\label{fam1}
\Mod_@ \Omega \le \Mod \TR(d/w),\qquad \text{where } d=\dist(\partial \Omega^+,\Omega^-).
\end{equation}
\end{lemma}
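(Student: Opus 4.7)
I would prove, for every affine automorphism $\phi\colon\C\onto\C$, the bound $\Mod\phi(\Omega)\le \Mod\TR(d/w)$; Definition~\ref{afmod} then gives the lemma by taking the supremum.

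After absorbing the translation of $\phi$ into the coordinates (modulus is translation-invariant), I may assume $\phi$ is $\R$-linear. Since $\C$-affine changes of coordinates also preserve the modulus, I normalize so that the two parallel lines at distance $w$ enclosing $\Omega^-$ are $\{\im z=0\}$ and $\{\im z=w\}$. Pick $p\in\Omega^-\cap\{\im z=0\}$ and $q\in\Omega^-\cap\{\im z=w\}$; then $\im(q-p)=w$. Apply $\psi(z)=-(z-\phi(p))/(\phi(q)-\phi(p))$, a $\C$-affine (conformal) map that sends $\phi(p)\mapsto 0$ and $\phi(q)\mapsto -1$. The image $\psi\phi(\Omega)$ has $\{-1,0\}$ in its bounded complementary continuum and $\infty$ in the unbounded one, so Teichm\"uller's classical Modulsatz yields
\[
\Mod\phi(\Omega)\le \Mod\TR\!\left(\inf_{z_0\in\partial\Omega^+}\frac{|\phi(z_0-p)|}{|\phi(q-p)|}\right),
\]
by bounding $\Mod R\le \Mod\TR(|z_*|)$ for any $z_*$ in the unbounded complementary continuum and minimizing over admissible $z_*$.

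The crux is to verify
\[
\inf_{z_0\in\partial\Omega^+}\frac{|\phi(z_0-p)|}{|\phi(q-p)|}\le \frac{d}{w};
\]
monotonicity of $\Mod\TR(\cdot)$ then completes the proof. Since $q-p$ has imaginary part $w$, the image $\phi(q-p)$ has a component of magnitude at least $w|\phi(i)|$ along $\phi(i)$. For a $z_0\in\partial\Omega^+$ lying vertically above or below $p$ at Euclidean distance at most $d$, the vector $z_0-p$ is purely imaginary of magnitude $\le d$, so $\phi(z_0-p)$ is a scalar multiple of $\phi(i)$ of magnitude $\le d\,|\phi(i)|$. The ratio is then at most $d/w$.

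The principal obstacle is guaranteeing the existence of such a $z_0$: the nearest portion of $\partial\Omega^+$ to $\Omega^-$ need not lie vertically above or below any width-realizing point of $\Omega^-$, so the vertical ray from $p$ could escape $\Omega^+$ only far away. The remedy is to choose the orientation of the width-strip adaptively: rotate so that the direction realizing $d=\dist(\partial\Omega^+,\Omega^-)$ is perpendicular to the strip, at the cost of replacing $w$ with the possibly larger width $w'\ge w$ in that direction. Since $\Mod\TR(d/w')\le \Mod\TR(d/w)$ by monotonicity, no essential ground is lost; coordinating the choice of $p$, $q$, and $z_0$ with the extremal geometry of the ring is the technical heart of the proof.
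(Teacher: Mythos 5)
Your overall strategy --- bounding $\Mod\phi(\Omega)$ for each affine $\phi$ by comparison with a Teichm\"uller ring and then taking the supremum over $\phi$ --- is the same as the paper's, but your key estimate has a genuine gap. The claimed lower bound $\abs{\phi(q-p)}\ge w\abs{\phi(i)}$ is false for a general $\R$-linear $\phi$: writing $q-p=x+iw$ with $x=\re(q-p)$ uncontrolled (nothing forces $p$ and $q$ to be vertically aligned; think of a staircase-shaped $\Omega^-$), you have $\phi(q-p)=x\,\phi(1)+w\,\phi(i)$, and when $\phi(1)$ and $\phi(i)$ are not orthogonal this vector can be arbitrarily short. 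For the shear $\phi(x+iy)=x+y+i\epsilon y$ and $q-p=-w+iw$ one gets $\abs{\phi(q-p)}=\epsilon w$, whereas $w\abs{\phi(i)}>w$; the ratio you need to control then blows up as $\epsilon\to 0$. Knowing that the coefficient of $\phi(i)$ in the oblique basis $\{\phi(1),\phi(i)\}$ equals $w$ gives no lower bound on the length. The fix is precisely what the paper does: adapt the projection direction to $\phi$ rather than fixing it in advance. If $v$ is the direction of maximal stretch, with singular value equal to the Lipschitz constant $L=\abs{\phi_z}+\abs{\phi_{\bar z}}$, then the projection of the connected set $\Omega^-$ onto \emph{every} direction has length at least $w$, so there are $p,q\in\Omega^-$ with $\abs{\langle q-p,v\rangle}\ge w$ and hence $\diam\phi(\Omega^-)\ge\abs{\phi(q)-\phi(p)}\ge Lw$, while trivially $\dist(\partial\phi(\Omega^+),\phi(\Omega^-))\le Ld$.

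The alignment problem you flag at the end is also real, and your proposed remedy does not close it: after rotating the strip to the direction realizing $d$, the distance-realizing point $p'\in\Omega^-$ need not lie on a boundary line of the new strip, so the best you can guarantee is some $q$ with $\abs{\im(q-p')}\ge w'/2$, losing a factor of $2$ --- and the shear issue above persists regardless. The paper avoids having to coordinate the choices of $p$, $q$, and $z_0$ altogether by invoking the Teichm\"uller extremal property in its diameter--distance form (among rings with prescribed diameter of the bounded complementary component and prescribed distance between the two components, the maximal modulus is that of the corresponding Teichm\"uller ring), rather than the two-point Modulsatz you use. With that form, the two estimates $\diam\phi(\Omega^-)\ge Lw$ and $\dist(\partial\phi(\Omega^+),\phi(\Omega^-))\le Ld$ immediately yield $\Mod\phi(\Omega)\le\Mod\TR(Ld/(Lw))=\Mod\TR(d/w)$, with the Lipschitz constant $L$ cancelling. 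If you insist on the two-point form, you run into the further difficulty that the point of $\partial\Omega^+$ nearest to $\Omega^-$ and the pair realizing the projected width of $\Omega^-$ share no common base point, so the single ratio $\abs{\phi(z_0-p)}/\abs{\phi(q-p)}$ cannot in general be bounded by $d/w$.
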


\begin{proof} Let $\phi\colon\C\to\C$ be an affine automorphism. Denote its Lipschitz constant by $L:=\abs{\phi_z}+\abs{\phi_{\bar z}}$.
Clearly $\dist(\partial \phi(\Omega^+),\phi (\Omega^-))\le L\,d$ and $\diam \phi(\Omega^-)\ge L w$.
Now~\eqref{fam1} follows from the extremal property of the Teichm\"uller ring: it has the greatest conformal modulus among all domains with
given diameter of the bounded component and given distance between components~\cite[Ch. III.A]{Ahb}.
\end{proof}

Even when the affine modulus is finite, the supremum in~\eqref{afmod1} is not always attained.
An example is given by the \emph{Gr\"otzsch ring}
\[\GR(s)=\{z\in \C\colon \abs{z}>1\}\setminus [s,+\infty),\qquad s>1.\]
Indeed,
\begin{equation}\label{gtmod}
\Mod_@ \GR(s)\le \Mod \mbox{$\TR\big(\frac{s-1}{2}\big)$}
\end{equation}
by Lemma~\ref{finiteaffmod}.
Equality holds in~\eqref{gtmod} because the images of $\GR(s)$ under mappings of the form $z+k\bar z$, $k\nearrow 1$,
converge to the domain $\C\setminus ([-2,2]\cup [2s,\infty))$ which is a $\C$-affine image of
$\TR\big(\frac{s-1}{2}\big)$. Yet, for any affine automorphism $\phi$
\[
\Mod \phi (\GR(s))< \Mod \phi\big\{\C\setminus ([-1,1]\cup [s,\infty))\}=\Mod \mbox{$\TR\big(\frac{s-1}{2}\big)$}
\]
where the first part expresses the monotonicity of modulus, and the second follows from~\eqref{teichstays}.
Thus the supremum in~\eqref{afmod1} is not attained by any $\phi$.

\begin{remark}\label{equalmod}
Since equality holds in~\eqref{gtmod}, Lemma~\ref{finiteaffmod} is sharp. Furthermore, the pair of
domains $\Omega=\TR\big(\frac{s-1}{2}\big)$ and $\Omega^*=\GR(s)$ can serve as a test case for whether
equality in~\eqref{suff1} implies the existence of a harmonic homeomorphism.
\end{remark}

\begin{remark}
The identity
\begin{equation}\label{relmod}
\Mod_@ \GR(s)=\Mod_@ \mbox{$\TR\big(\frac{s-1}{2}\big)$}
\end{equation}
somewhat resembles the relation between conformal moduli of the Gr\"otzsch and
Teichm\"uller rings~\cite[Ch. III.A]{Ahb},
\[\Mod \GR(s)=\frac{1}{2}\Mod \TR(s^2-1).\]
\end{remark}

\section{Proof of Theorem~\ref{necessary}}\label{necessarysection}

Before proceeding to the proof we recollect basic facts of potential theory in the plane which can be found in~\cite{Ranb}.
A domain $\Omega$ has Green's function $G_{\Omega}$ whenever $\C\setminus \Omega$ contains
a nondegenerate continuum. Our normalization is $G_{\Omega}(z,\zeta)=-\log\abs{z-\zeta}+O(1)$ as $z\to\zeta$.
In particular, $G_{\Omega}(z,\zeta)>0$. Green's function for the unit disk is
\begin{equation}\label{gdisk}
G_{\DD}(z,\zeta)=\log \left|\frac{1-z \bar \zeta}{z-\zeta}\right|.
\end{equation}
If $f\colon \Omega\to\Omega^*$ is a holomorphic function, then the subordination principle holds:
\begin{equation}\label{gsub}
G_{\Omega}(z,\zeta)\le G_{\Omega^*}(f(z),f(\zeta)).
\end{equation}
One consequence of~\eqref{gdisk} and~\eqref{gsub} is a general version of the Schwarz lemma.
If $f\colon \Omega\to\DD$ is holomorphic and $f(a)=0$ for some $a\in\Omega$, then
\begin{equation}\label{schw}
\abs{f(z)}\le \exp(-G_{\Omega}(z,a))<1,\qquad z\in\Omega.
\end{equation}

There is an application of~\eqref{schw} to harmonic homeomorphisms.
We refer to~\cite[p.5]{Dub} for a discussion of relation between harmonic and quasiconformal mappings, and to the
book~\cite{Ahb} for the general theory of quasiconformal mappings.

\begin{proposition}\label{distprop} Let $\Omega\subset \C$ be a domain with Green's function $G_{\Omega}$. Fix $a\in \Omega$.
For any harmonic homeomorphism $h\colon \Omega\to \C$ there exists an affine automorphism
$\phi$ such that the composition $H=\phi\circ h$ satisfies
\begin{equation}\label{disth}
\frac{\abs{H_{\bar z}}}{\abs{H_z}} \le \exp(-G_{\Omega}(z,a))<1
\end{equation}
for $z\in \Omega$.
\end{proposition}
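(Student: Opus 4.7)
The plan is to choose $\phi$ so that the composition $H=\phi\circ h$ becomes conformal at the distinguished point $a$, i.e.\ $H_{\bar z}(a)=0$; once that normalization is in place, the ratio $\overline{H_{\bar z}}/H_z$ will be a holomorphic self-map of $\DD$ vanishing at $a$, and the Schwarz lemma in the form \eqref{schw} should deliver \eqref{disth}.

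First I would reduce to the case in which $h$ is orientation-preserving. Conjugation $w\mapsto \bar w$ is itself an affine automorphism, so replacing $h$ by $\bar h$ if necessary does no harm. Then H.~Lewy's theorem for planar harmonic homeomorphisms gives $J_h=|h_z|^2-|h_{\bar z}|^2>0$ throughout $\Omega$, so in particular $|h_{\bar z}(a)|<|h_z(a)|$ and $h_z(a)\ne 0$.

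Next I would take
\[
\phi(w) \;=\; \overline{h_z(a)}\,w \;-\; h_{\bar z}(a)\,\bar w.
\]
Its Jacobian equals $|h_z(a)|^2-|h_{\bar z}(a)|^2>0$, so $\phi$ is a bona fide affine automorphism. A direct chain-rule computation --- using that $h_z$ is holomorphic and $h_{\bar z}$ anti-holomorphic because $h$ is harmonic --- yields
\[
H_z \;=\; \overline{h_z(a)}\,h_z - h_{\bar z}(a)\,\overline{h_{\bar z}},\qquad
H_{\bar z} \;=\; \overline{h_z(a)}\,h_{\bar z} - h_{\bar z}(a)\,\overline{h_z},
\]
from which one reads off $H_z(a)=|h_z(a)|^2-|h_{\bar z}(a)|^2>0$ and $H_{\bar z}(a)=0$, exactly as required.

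Finally, $H=\phi\circ h$ is again an orientation-preserving harmonic homeomorphism, so Lewy's theorem applied to $H$ gives $|H_z|>|H_{\bar z}|\ge 0$ on $\Omega$. Thus $H_z$ is nowhere zero, and the function $F:=\overline{H_{\bar z}}/H_z$ --- a quotient of two holomorphic functions on $\Omega$ --- is holomorphic on $\Omega$, takes values in $\DD$, and vanishes at $a$. Applying \eqref{schw} gives $|F(z)|\le \exp(-G_\Omega(z,a))$, which is \eqref{disth} because $|F|=|H_{\bar z}|/|H_z|$. The only real content, and what I would call the main obstacle until one spots it, is the realization that the full $\R$-affine group --- not merely its $\C$-linear subgroup --- carries exactly the two complex parameters needed to kill $H_{\bar z}(a)$; after that the Schwarz lemma does everything.
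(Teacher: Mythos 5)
Your proof is correct and follows essentially the same route as the paper: both normalize by an $\R$-affine map of the target so that the second complex dilatation of $H=\phi\circ h$ vanishes at $a$ (your $\phi$ is the paper's $h\mapsto h-\varkappa\bar h$ with $\varkappa=\nu(a)$, composed with an irrelevant $\C$-linear scaling), and then apply the Schwarz lemma \eqref{schw} to the resulting holomorphic self-map of $\DD$. The only difference is cosmetic --- you work with $\overline{H_{\bar z}}/H_z$ directly rather than with the antiholomorphic dilatation $\nu$ and the disk automorphism $\nu\mapsto(\nu-\varkappa)/(1-\overline{\varkappa}\nu)$.
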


\begin{proof} Replacing $h$ with $\bar h$, we may assume that $h$ is orientation preserving. Then $h$
satisfies the second Beltrami equation
\begin{equation}\label{disth2}
h_{\bar z}(z)=\nu (z)\overline{h_z(z)}
\end{equation}
where the \emph{second complex dilatation} $\nu$ is an antiholomorphic function from $\Omega$ into $\D$.
Since $\abs{\nu(z)}<1$, the mapping $h$ is quasiconformal away from the boundary of $\Omega$.
Affine transformations of $h$ do not affect harmonicity by may decrease $\abs{\nu}$.
Indeed, the composition
\[H(z)=h(z)-\varkappa \bar h(z),\qquad \varkappa\in\DD, \ \alpha\in\C\setminus\{0\}\]
satisfies new second Beltrami equation
\begin{equation}\label{disth4}
H_{\bar z}(z)=\frac{\nu(z)-\varkappa}{1-\overline{\varkappa} \nu(z)}\overline{H_z(z)}
\end{equation}
By setting $\varkappa =\nu(a)$ we achieve that the second complex dilatation of $H$ vanishes at $a$.
Now the required estimate~\eqref{disth} follows from~\eqref{schw}.
\end{proof}

We would like to make the estimate~\eqref{disth} more explicit
when the domain $\Omega$ is a circular annulus $\Omega=A(R^{-1},R)$.
Although an explicit formula for Green's function of an annulus can be found, for us it suffices to have a lower bound.
We obtain such a bound from the subordination principle~\eqref{gsub}. Indeed, for any $\alpha>0$
the vertical strip $S=\{z\in\C\colon \abs{\re z}<\frac{\pi}{2\alpha}\}$ has Green's function~\cite[p.~109]{Ranb}
\begin{equation}\label{gg1}
G_S(z,\zeta)=\log\left|\frac{e^{i\alpha z}+e^{-i\alpha \bar \zeta}}{e^{i\alpha z}-e^{i\alpha \zeta}}\right|.
\end{equation}
We use~\eqref{gg1} with $\zeta=0$ and $z=iy$ where $\abs{y}\le \pi$
\begin{equation}\label{gg2}
G_S(iy,0)=\log\left|\frac{e^{-\alpha y}+1}{e^{-\alpha y}-1}\right|
=\log \coth \frac{\alpha \abs{y}}{2} \ge \log \coth \frac{\pi \alpha}{2}.
\end{equation}
Since $w=e^z$ maps $S$ onto $\Omega=A(R^{-1},R)$ with $\Mod \Omega= 2\log R=\pi/\alpha$,
inequality~\eqref{gg2} together with the subordination principle~\eqref{gsub} yield
\begin{equation}\label{gann}
G_{\Omega}(z,\zeta)\ge \log \coth \frac{\pi^2}{4\log R},\qquad z,\zeta\in \T.
\end{equation}
We are now ready to give an explicit estimate for holomorphic functions with at least one zero in an annulus.

\begin{lemma}\label{slem}
 Let $\Omega=A(R^{-1},R)$ and suppose that $f\colon \Omega\to\DD$ is a holomorphic function with $f(1)=0$.
Then for each $0\le \alpha<1$ we have
\begin{equation}\label{slem1}
\max_{R^{-\alpha}\le \abs{z}\le R^\alpha} \abs{f(z)} \le k^{1-\alpha},\qquad k=\tanh \frac{\pi^2}{4\log R}<1
\end{equation}
\end{lemma}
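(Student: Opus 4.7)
The plan is to combine the Schwarz-type inequality \eqref{schw} (already stated for general domains) with the explicit Green's function estimate \eqref{gann} for the annulus, and then propagate the resulting bound off the unit circle via Hadamard's three-circles theorem.

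First, I would apply \eqref{schw} to the holomorphic map $f\colon\Omega\to\DD$ with $a=1$, obtaining $\abs{f(z)}\le \exp(-G_\Omega(z,1))$ for every $z\in\Omega$. On the unit circle $\T\subset\Omega$, the estimate \eqref{gann} (with $\zeta=1$) gives $G_\Omega(z,1)\ge \log\coth\frac{\pi^2}{4\log R}=-\log k$, so
\[
\max_{\abs{z}=1}\abs{f(z)}\le k.
\]
This is the base estimate on the middle circle; note that on the two outer circles $\abs{z}=R^{\pm 1}$ we only have the trivial bound $\abs{f(z)}\le 1$ coming from $f(\Omega)\subset\DD$ (interpreted in the limit).

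Next, I would invoke Hadamard's three-circles theorem on $\Omega=A(R^{-1},R)$: the function $r\mapsto \log M(r)$, where $M(r)=\sup_{\abs{z}=r}\abs{f(z)}$, is convex in $\log r$. For $r=R^\alpha$ with $0\le\alpha<1$, interpolating between $r=1$ (where $\log M\le \log k$) and $r=R$ (where $\log M\le 0$) yields
\[
\log M(R^\alpha)\le (1-\alpha)\log k + \alpha\cdot 0 = (1-\alpha)\log k,
\]
hence $M(R^\alpha)\le k^{1-\alpha}$. The same interpolation between $r=R^{-1}$ and $r=1$ gives $M(R^{-\alpha})\le k^{1-\alpha}$.

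Finally, to get the bound on the full closed sub-annulus $\{R^{-\alpha}\le\abs{z}\le R^\alpha\}$, I would apply the maximum principle for $\abs{f}$ (which is subharmonic) on this region: the maximum is attained on the two boundary circles, where we have already shown $\abs{f}\le k^{1-\alpha}$. This gives \eqref{slem1}. There is no real obstacle here: the nontrivial analytic input has been isolated already in \eqref{gann}, and what remains is a clean two-line three-circles interpolation together with the observation that $f$ is bounded by $1$ on the outer circles in the limit.
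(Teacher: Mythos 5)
Your proposal is correct and follows essentially the same route as the paper: the base case on the unit circle via \eqref{schw} and \eqref{gann}, followed by Hadamard's three-circles interpolation against the trivial bound $M(r)\le 1$ near $\abs{z}=R^{\pm1}$. The only cosmetic difference is your final maximum-principle step, which the paper absorbs into the convexity of $\log M$ in $\log r$ (convex interpolation already bounds $M(R^\beta)$ for all $\abs{\beta}\le\alpha$, since $\log k<0$).
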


\begin{proof} The case $\alpha=0$ immediately follows from~\eqref{schw} and~\eqref{gann}. For the general case, let
\[
M(r)=\max_{\abs{z}=r}\abs{f(z)},\qquad \frac{1}{R}< r <R
\]
By Hadamard's three circle theorem, $\log M(r)$ is a convex function of $\log r$. Since $\log M(1)\le \log k$
and $\log M(r)<1$ for all $r$, the convexity implies
\[\log M(R^{\alpha})\le (1-\alpha)\log k\]
and the same estimate holds for $\log M(R^{-\alpha})$.
\end{proof}

\begin{proof}[Proof of Theorem~\ref{necessary}]
With the aid of conformal transformation of $\Omega$ we may assume that $\Omega=A(R^{-1},R)$
where $2\log R=\Mod\Omega$. As in the proof of Proposition~\ref{distprop}, we apply an affine transformation
to obtain a harmonic mapping $H$ whose second complex dilatation vanishes at~$1$. By Lemma~\ref{slem}
the restriction of $H$ to $\Omega_{\alpha}=A(R^{-\alpha},R^{\alpha})$ is $K$-quasiconformal; that is,
\[
\frac{\abs{H_z}+\abs{H_{\bar z}}}{\abs{H_z}-\abs{H_{\bar z}}} \le
K=\frac{1+k^{1-\alpha}}{1-k^{1-\alpha}},\qquad k=\tanh \frac{\pi^2}{4\log R}.
\]
The conformal modulus of $\Omega_{\alpha}$ is distorted by the factor of at most $K$ under the mapping $H$, see~\cite{Ahb}.
Hence
\begin{equation}\label{psi021}
\Mod_{@}h(\Omega)\ge \Mod H(\Omega)\ge \frac{1-k^{1-\alpha}}{1+k^{1-\alpha}}\Mod \Omega_{\alpha}
= \alpha\,\frac{1-k^{1-\alpha}}{1+k^{1-\alpha}}\Mod \Omega.
\end{equation}
We are free to choose any $0<\alpha<1$ in~\eqref{psi021}. Introduce the function
\begin{equation}\label{lam1}
\lambda(t)=\sup_{0<\alpha<1} \alpha\,\frac{t^{1-\alpha}-1}{t^{1-\alpha}+1},\qquad t\ge 1
\end{equation}
which is positive and decreasing for $t>0$. Clearly $\lambda(t)\to 1$ as $t\to\infty$.
Now~\eqref{psi021} takes the form
\begin{equation}\label{psi022}
\frac{\Mod_{@}h(\Omega)}{\Mod\Omega}\ge \Phi(\Mod\Omega),\qquad \text{where }\Phi(\tau)=\lambda\Big(\coth \frac{\pi^2}{2\tau}\Big)
\end{equation}
and $\Phi(\tau)\to 1$ as $\tau\to\infty$. To obtain a concrete bound, we test the supremum in~\eqref{lam1} by putting
\[
\alpha=1-\frac{\log(1+\log t)}{\log t},
\]
obtaining
\[
\lambda(t)\ge \left(1-\frac{\log(1+t)}{t}\right)\frac{\log t}{2+\log t}
\]
which is~\eqref{specific}.
\end{proof}

\section{Proof of Theorem~\ref{sufficient}}\label{sufficientsection}

For notational simplicity we denote by $\Ha(\Omega,\Omega^*)$ the class of harmonic homeomorphisms from $\Omega$ onto $\Omega^*$.
This includes conformal mappings $\Omega\to\Omega^*$ if they exist. The proof of Theorem~\ref{sufficient} is divided into three parts.

\subsection{Exceptional Pairs}\label{except}

In this section we prove that $\Ha (\Omega,\Omega^*)$ is empty when $\Mod\Omega<\infty$ and $\C\setminus\Omega^*$ is bounded.
Suppose to the contrary that $h\in\Ha (\Omega,\Omega^*)$. Up to a conformal
transformation, $\Omega$ is the circular annulus $A(1,R)$.
The target domain is  $\Omega^*=\C\setminus E$ for some compact set $E$.
With the help of inversion $z\mapsto R^2/z$ we can arrange so that $h(z)$ approaches $E$ as $\abs{z}\to 1$.

For large enough integers $m$ the open disk $\DD_m$ contains $E$. Let $\Omega_m=h^{-1}(\DD_m\setminus E)$.
As $m\to\infty$, the outer boundary of $\Omega_m$ approaches $\T_R$.
Thus there exist conformal mappings $g_m\colon A(1,R_m)\onto \Omega_m$ such that $g_m(z)\to z$ pointwise
and $R_m\nearrow R$ as $m\to\infty$. Define a harmonic mapping
\[h_m(z)=\frac{1}{m}h(g_m(R_m z)),\qquad z\in A(R_m^{-1},1)\]
and observe that for any $z\in A(R^{-1},1)$ the value $h_m(z)$ is defined when $m$ is large enough.
Moreover, $h_m(z)\to 0$ because $g_m(R_m z)\to Rz$, and the convergence is uniform on compact subsets of $A(R^{-1},1)$, i.e.,
\begin{equation}\label{convto0}
\lim_{m\to\infty} \sup_{r_1\le \abs{z}\le r_2}\abs{h_m(z)}\to 0,\qquad R^{-1}<r_1<r_2<1.
\end{equation}
Let us write
\begin{equation}\begin{split}
h_m(z)&=\sum_{n\ne 0}\left(a_n z^n+b_n \bar z^{-n}\right) + a_0 \log\abs{z}+b_0 \\
&=\sum_{n\ne 0}\left(a_n \rho^n+b_n \rho^{-n}\right)e^{ni\theta} + a_0 \log \rho+b_0,\quad z=\rho e^{i\theta}
\end{split}
\end{equation}
where the coefficients depend on $m$ as well but we suppress this dependence in the notation. In fact,~\eqref{convto0} implies that
for each fixed $n$ the coefficients $a_n,b_n$ tend to $0$ as $m\to\infty$.
On the other hand, $h_m$ extends to a sense-preserving homeomorphism $\T\to\T$, which leads to a contradiction with the
following result. \qed

\begin{theorem}\label{weitsman} (Weitsman~\cite{We98})
Let $f\colon\T\to\T$ be a sense preserving homeomorphism with Fourier coefficients $c_n$, $n\in\Z$. Then
\begin{equation}\label{weitsman1}
\abs{c_0}+\abs{c_1}\ge \frac{2}{\pi}.
\end{equation}
\end{theorem}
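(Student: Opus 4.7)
I would begin by parametrizing: since $f\colon\T\to\T$ is a sense-preserving homeomorphism, write $f(e^{i\theta})=e^{i\phi(\theta)}$ with $\phi\colon\R\to\R$ continuous, strictly increasing, and satisfying $\phi(\theta+2\pi)=\phi(\theta)+2\pi$. Then
\[
c_n = \frac{1}{2\pi}\int_{-\pi}^{\pi}e^{i(\phi(\theta)-n\theta)}\,d\theta.
\]
The quantity $|c_0|+|c_1|$ is invariant under the two-parameter group of rotations $f\mapsto e^{-i\alpha}f(e^{i\beta}\,\cdot\,)$, which sends $c_n\mapsto e^{i(-\alpha+n\beta)}c_n$. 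Choosing $\alpha=\arg c_0$ and $\beta=\arg c_0-\arg c_1$, I would normalize so that both $c_0\ge 0$ and $c_1\ge 0$.

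Next, using the identity $1+e^{-i\theta}=2\cos(\theta/2)e^{-i\theta/2}$ and the fact that both $c_0$ and $c_1$ are real after the normalization, taking real parts gives
\[
|c_0|+|c_1|=c_0+c_1=\frac{1}{\pi}\int_{-\pi}^{\pi}\cos(\theta/2)\cos\bigl(\phi(\theta)-\theta/2\bigr)\,d\theta,
\]
so the theorem reduces to showing this integral is at least $2$. The weight $\cos(\theta/2)$ is nonnegative on $[-\pi,\pi]$, and the monotonicity of $\phi$ is the key structural constraint preventing the integrand from being too negative. As a check on the constant, the limiting configuration $\phi\equiv 0$ on $[-\pi,0)$ and $\phi\equiv\pi$ on $[0,\pi)$ (two jumps of size $\pi$, under which $f$ collapses each semicircle of $\T$ to one of the points $\pm 1$) gives $c_0=0$ and $|c_1|=2/\pi$; after the normalizing rotation the weighted integral evaluates to exactly $2$, confirming sharpness.

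To establish the lower bound itself, I would perform the change of variables $t=\phi(\theta)$ to recast the functional in terms of the pushforward probability measure $d\mu=\phi_*(d\theta/2\pi)$ on the target circle. The functional then becomes a linear form in $d\mu$ with a concrete trigonometric weight, and the extremization reduces to a moment-type problem on $\T$. The hardest part will be correctly identifying the class of admissible pushforward measures (those that actually arise from degree-one monotone homeomorphisms, rather than arbitrary probability measures), and making rigorous the limit passage from smooth strictly monotone $\phi$ to the degenerate two-point atomic measure $\tfrac12(\delta_1+\delta_{-1})$ that realizes the extremal value $2/\pi$.
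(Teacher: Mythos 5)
The paper does not prove this statement at all: it is quoted verbatim from Weitsman \cite{We98} (with the history running through Hall \cite{Ha82,Ha83} and Sheil-Small \cite{SS}), so your attempt has to be judged on its own merits rather than against an in-paper argument. Your setup is correct and standard: the parametrization $f(e^{i\theta})=e^{i\phi(\theta)}$ with $\phi$ increasing and $\phi(\theta+2\pi)=\phi(\theta)+2\pi$, the rotation normalization making $c_0,c_1\ge 0$, the identity $c_0+c_1=\frac{1}{\pi}\int_{-\pi}^{\pi}\cos(\theta/2)\cos(\phi(\theta)-\theta/2)\,d\theta$, and the verification that the degenerate two-jump map gives $|c_0|+|c_1|=2/\pi$ are all right, and they correctly identify the sharp configuration.

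The gap is that the inequality itself is never proved, and the route you propose for it would fail. After the change of variables $t=\phi(\theta)$ the integrand $\cos(\theta/2)\cos(\phi(\theta)-\theta/2)$ is a function of the \emph{pair} $(\theta,\phi(\theta))$, not of $\phi(\theta)$ alone, so the functional is \emph{not} a linear form in the pushforward measure $\mu=\phi_*(d\theta/2\pi)$; it depends on the full monotone coupling between $\theta$ and $t$. Indeed, if the problem really reduced to minimizing a linear functional over probability measures on $\T$, the infimum would be far below $2/\pi$: each of $|c_0|$ and $|c_1|$ separately can be made arbitrarily small (e.g.\ $|c_0|=0$ for $f=\mathrm{id}$, and $|c_1|$ small for maps concentrating near a point), and it is precisely the interplay enforced by monotonicity and degree one that keeps the \emph{sum} bounded below. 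That interplay is the entire content of the theorem, and your plan defers it to ``a moment-type problem'' without specifying the constraint set or the extremization argument. Note also that even the weaker bound $|c_0|+|c_1|\ge 1/2$ of Hall required a substantial argument, and Weitsman's sharp constant $2/\pi$ more so; a correct proof must exploit the monotonicity of $\phi$ quantitatively (for instance via a pointwise or rearrangement inequality for the weighted integral over each period), which your proposal does not yet do.
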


Weitsman's inequality is sharp. Earlier Hall~\cite{Ha82} proved~\eqref{weitsman1} with $1/2$ instead of $2/\pi$.
The validity of~\eqref{weitsman1} with some absolute constant can be traced to the work of several
authors, see~\cite{Ha83}. It is worth mentioning that~\eqref{weitsman1} first arose as a special case of
Shapiro's conjecture~\cite[Problem 5.41]{ABB}, which posed that for any sense-preserving $k$-fold cover $f\colon\T\to \T$
\begin{equation}\label{shapiro}
\abs{c_0}^2+\abs{c_1}^2+\dots+\abs{c_k}^2 \ge \delta_k
\end{equation}
where $\delta_k>0$ depends only on $k$. This conjecture was proved by Hall~\cite{Ha82,Ha83} for $k=2$
and by Sheil-Small~\cite{SS} in general. The rate of decay of $\delta_k$ remains unknown, see~\cite{Ha86}.

\subsection{Non-exceptional pairs with degenerate target}\label{degen}

Here we assume that $\Mod \Omega^*=\infty$ but $\C\setminus\Omega^*$ is unbounded.
Without loss of generality, $\Omega^*=G\setminus\{0\}$ where $G\varsubsetneq\C$ is a simply connected domain and $0\in G$.
For $t\ge 0$ we define a mapping $F_t\colon\C\setminus \{0\}\to\C$ by
\[F_t(re^{i\theta}) = (r+\sqrt{r^2+t^2}) e^{i\theta} \]
Note that $\abs{F_t(z)}\ge t$ for all $z\ne 0$.
The choice of $F_t$ is motivated by the fact that its inverse is harmonic:
\[F_t^{-1}(\zeta)=\frac{1}{2}\left(\zeta-\frac{t^2}{\bar \zeta}\right) \]
Since $F_t^{-1}$ maps the doubly connected domain $F_t(\Omega^*)$ onto $\Omega^*$, it remains
to find $t$ such that $\Mod F_t(\Omega^*)=\Mod \Omega$. The latter will follow from the intermediate value theorem
once we prove $\Mod F_t(\Omega^*)\to 0$ as $t\to\infty$ and $\Mod F_t(\Omega^*)\to \infty$ as $t\to 0$.
The latter is obvious, so we proceed to the proof of the former limit.

Let $d=\dist(\partial G,0)$. The complement of $F_t(\Omega^*)$ has two components: one is the disk $\overline{\DD_t}$
and the other contains a point with absolute value $d+\sqrt{d^2+t^2}$. By the extremal property of the Gr\"otzsch ring~\cite[Ch. III.A]{Ahb},
$\Mod F_t(\Omega^*)$ does not exceed the conformal modulus of the Gr\"otzsch ring $\GR(s)$ with
\[s=\frac{d+\sqrt{d^2+t^2}}{t}\]
As $t\to\infty$, we have $s\to 1$ and thus $\Mod \GR(s)\to 0$. This completes the proof. \qed

\subsection{Non-exceptional pairs with nondegenerate target}\label{nondegen}

Since harmonicity is invariant under affine transformations of the target, we may assume that
$\Mod\Omega^*>\Mod\Omega$. There are two substantially different cases.

\textbf{Case 1.} The set $\C\setminus \Omega^*$ is contained in a line. Thus, up to a $\C$-affine automorphism, $\Omega^*$ is the
Teichm\"uller ring
\[\TR(t)=\C\setminus ([-1,0]\cup [t,+\infty))\]
We may and do replace $\Omega$ with a conformally equivalent domain $\TR(s)$ for some $0<s<t$.
Thus our task is to harmonically map $\TR(s)$ onto $\TR(t)$.

Let $b\ge 0$ be a number to be chosen later. Define a piecewise linear function $g\colon\R\to\R$ by
\[
g(x)=\begin{cases} b, \quad &x\le -1 \\
-bx, \quad & -1\le x\le0 \\
0,\quad & x\ge 0 \\
\end{cases}
\]
and consider the domain $G_b=\{x+iy\colon y>g(x)\}$ shown in Figure~1.

\begin{figure}[!h]
\begin{center}
\includegraphics*[height=1.6in]{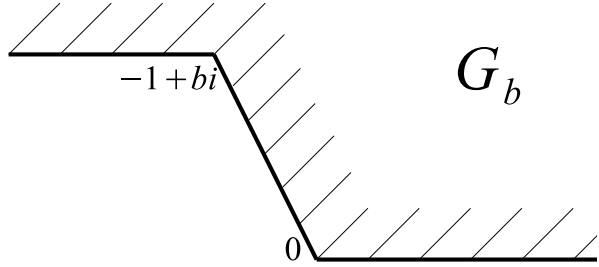}
\caption{Polygonal domain $G_b$}
\end{center}
\end{figure}

Let $f=f_b$ be a conformal mapping of the upper halfplane $\HH=\{z\colon \im z>0\}$
onto $G_b$ normalized by boundary conditions $f(-1)=-1+bi$, $f(0)=0$, and $f(\infty)=\infty$.
It is important to notice that the boundary of $G_b$ satisfies the quasiarc condition uniformly with respect to $b$; that is,
\begin{equation}\label{qarc}
\left|\frac{\zeta_2-\zeta_1}{\zeta_2-\zeta_1}\right|\le C
\end{equation}
for any three points on $\partial G_b$ such that $\zeta_3$ separates $\zeta_1$ and $\zeta_2$.
By a theorem of Ahlfors~\cite[p.~49]{Ahb} $f$ extends to a $K$-quasiconformal mapping $\C\to \C$ with $K$ independent of $b$.
The latter can be expressed via the quasisymmetry condition (see~\cite{TV} or~\cite[Ch.~11]{Heb}):
there is a homeomorphism $\eta\colon [0,\infty)\to [0,\infty)$, independent of $b$, such that
\begin{equation}\label{qs1}
\frac{\abs{f(p)-f(q)}}{\abs{f(p)-f(r)}}\le \eta \left(\frac{\abs{p-q}}{\abs{p-r}}\right)
\end{equation}
for all distinct points $p,q,r\in \C$. Applying~\eqref{qs1} to the triple $-1,0,s$, we find
\begin{equation}\label{qs2}
f(s)\ge \frac{\sqrt{1+b^2}}{ \eta(1/s)}
\end{equation}
Hence $f(s)\to \infty$ as $b\to\infty$. On the other hand, $f(s)=s$ when $b=0$
Since $f(s)=f_b(s)$ depends continuously on $b$, there exists $b>0$ for which $f(s)=t$. Let us fix such $b$.

As observed above, $f_b$ has a continuous extension to $\overline{\HH}$. It takes the segments
$(-\infty,-1)$ and $(0,s)$ into horizontal segments $(-\infty,-1)+ib$ and $(0,t)$ respectively.
By the reflection principle $f_b$ extends holomorphically across each segment, and we have $\re f(\bar z)=\re f(z)$.
It follows that the function $u(z)=\re f(z)$ extends harmonically to the entire Teichm\"uller ring $\TR(s)$.

Consider the harmonic mapping
\begin{equation}\label{defh}
h(z)=u(z)+i\im z
\end{equation}
which by construction is continuous in $\C$. We claim that $h$
is a homeomorphism from $\TR(s)$ onto $\TR(t)$. Since $h$ agrees with $f$ on $\R$, it follows that $h$ maps $\R$ homeomorphically onto $\R$
in such a way that $h(-1)=-1$, $h(0)=0$, and $h(s)=t$. To prove that $h$ is injective in $\TR(s)$, we only need to show that $u(x,y)$ is a strictly
increasing function of $x$ for any fixed $y>0$. The partial derivative $u_x$ is harmonic and nonconstant in $\HH$, has nonnegative boundary
 values on $\R$, and is bounded at infinity. Thus $u_x>0$ in $\HH$ by the maximum principle~\cite{Ranb} and the claim is proved.

\textbf{Case 2.} The set $\C\setminus \Omega^*$ is not contained in a line. We claim that there exists an affine automorphism
$\phi$ such that $\Mod\phi(\Omega^*)=\Mod\Omega$. If this holds, then $\Omega $ can be mapped conformally onto $\phi(\Omega^*)$ and
the composition with $\phi^{-1}$ furnishes the desired harmonic homeomorphism.
Since $\Mod\Omega^*>\Mod\Omega$  and $\Mod\phi(\Omega^*)$ depends continuously on the coefficients of $\phi$, it suffices to
show
\begin{equation}\label{suffices}
\inf_{\phi} \{\Mod\phi(\Omega^*);\quad \phi\colon\C\onto\C \text{ affine}\} =0
\end{equation}
Let $E$ and $F$ be the bounded and unbounded components of the complement of $\Omega^*$, respectively. We need a lemma.

\begin{lemma}\label{trunk} There exists a compact set $\tilde F\subset F$ such that the union $E\cup \tilde F$
is not contained in a line.
\end{lemma}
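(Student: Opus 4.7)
The plan is a short geometric case analysis exploiting only three facts: $E$ and $F$ are nonempty disjoint sets (being the components of $\C\setminus\Omega^*$), and the Case~2 hypothesis says $E\cup F$ is not contained in any line. My candidate for $\tilde F$ will consist of at most two points of $F$, so compactness will be automatic.

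First I would fix an arbitrary $p\in F$. Since $p\notin E$ and $E\neq\emptyset$, the set $E\cup\{p\}$ contains at least two distinct points. If $E\cup\{p\}$ is not contained in any line, set $\tilde F=\{p\}$ and stop. Otherwise $E\cup\{p\}$ lies in some line $\ell$, which is uniquely determined because $|E\cup\{p\}|\ge 2$. The Case~2 hypothesis together with $E\subset\ell$ forces $F\not\subset\ell$, so I pick $q\in F\setminus\ell$ and set $\tilde F=\{p,q\}$, a finite, hence compact, subset of $F$.

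To verify the second subcase, suppose for contradiction that $E\cup\tilde F$ lies in some line $\ell'$. Then $\ell'\supset E\cup\{p\}$, and by the uniqueness of $\ell$ we must have $\ell'=\ell$, contradicting $q\notin\ell$. There is no serious obstacle in the argument; the lemma is essentially the combinatorial remark that one extra point of $F$ lying off the unique candidate line through $E\cup\{p\}$ is enough to break collinearity with $E$.
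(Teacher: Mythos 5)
Your argument is logically correct as a proof of the literal statement: a one- or two-point subset of $F$ is compact, and your case analysis (either $E\cup\{p\}$ already spans two directions, or it determines a unique line $\ell$ which, by the Case~2 hypothesis, cannot contain all of $F$) is airtight. It is also more elementary than the paper's proof, which truncates $F$ to $F_R=\{z\in F\colon\abs{z}\le R\}$ and takes $\tilde F$ to be a nontrivial connected component of $F_R$ for large $R$.

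However, the paper's seemingly heavier choice is not an accident, and you should be aware of what it buys. In the very next step the lemma is used to produce two \emph{distinct} parallel lines $\ell_1,\ell_2$ such that every line between them meets both $E$ and $\tilde F$; the positive separation $a$ of these lines then enters the extremal length bound $D/a$ with $D=\diam(E\cup\tilde F)$. For every line of a strip of positive width to meet $\tilde F$, the orthogonal projection of $\tilde F$ onto the direction transverse to the strip must contain a nondegenerate interval. A finite set such as your $\{p\}$ or $\{p,q\}$ projects onto at most two points, which would force $a=0$ and render the extremal length estimate vacuous. The paper's $\tilde F$ is a nondegenerate continuum, so its projections are closed intervals, which is exactly the (unstated) extra property the application requires. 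So your proof establishes the lemma as written, but if you were reconstructing the whole of Case~2 you would need to upgrade $\tilde F$ to a continuum --- for instance by replacing your chosen points with the connected components of $F_R$ containing them, which is essentially what the authors do.
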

\begin{proof} Let $F_R=\{z\in F\colon \abs{z}\le R\}$ for $R>0$. If $E$ is not contained in any line,
then we can let $\tilde F$ be any nontrivial connected component of $F_R$, with $R$ large.
If $E$ is contained in some line $\ell$, then $F_R$ is not contained in $\ell$ when $R$ is large enough, and we can again choose
$\tilde F$ to be one of its connected components.
\end{proof}

There exist two parallel lines $\ell_1$ and $\ell_2$ such that any line between $\ell_1$ and $\ell_2$ meets both $E$ and $\tilde F$.
We may assume that these lines are vertical, say $\re z=0$ and $\re z=a$. Let $D=\diam(E\cup \tilde F)$.
For each $0<t<a$ there is a segment of line $\re z=t$ that joins $E$ to $F$ and has length is at most $D$.
The extremal length of the family of all such segments is at most $D/a$, see~\cite[Ch.~I]{Ahb}.
Under the affine transformation $\phi(x+iy)=Mx+iy$ the length of vertical segments is unchanged, while the width
of their family becomes $Ma$ instead of $a$. Thus the extremal length of all rectifiable curves connecting
$\phi(E)$ and $\phi(\tilde F)$ tends to $0$ as $M\to\infty$. The relation between modulus and extremal length~\cite[Ch.~I]{Ahb}
implies $\Mod\phi(\Omega^*)\to 0$ as $M\to\infty$. This completes the proof of~\eqref{suffices} and of Theorem~\ref{sufficient}.
\qed

\section{Concluding remarks}\label{conclude}

Since Conjecture~\ref{genNit} is known to be true for circular annuli~\cite{IKO2} (Nitsche Conjecture),
it is natural to test it on other canonical doubly connected domains: the Gr\"otzsch and Teichm\"uller rings. Precisely, the
questions are as follows.
\begin{question}
\textit{Gr\"otzsch--Nitsche Problem}: for which $1<s,t<\infty$ does there exist a harmonic homeomorphism
$h\colon \GR(s)\onto \GR(t)$?
\newline \textit{Teichm\"uller--Nitsche Problem}: for which $0<s,t<\infty$ does there exist a harmonic
homeomorphism $h\colon \TR(s)\onto \TR(t)$?
\end{question}

We offer the following observation.

\begin{remark}\label{tex} There exists a harmonic homeomorphism $h\colon \TR(s)\onto \TR(t)$ provided that
\begin{equation}\label{tex1}
\frac{t+1}{t} \le \Big(\frac{s+1}{s}\Big)^{3/2}.
\end{equation}
\end{remark}

Indeed, by virtue of Theorem~\ref{sufficient} we only need to consider $t<s$. Choose $1<\alpha \le 3/2$ so that
\[\frac{t+1}{t} = \Big(\frac{s+1}{s}\Big)^{\alpha}.\]
Let $G= \C\setminus (-\infty,0]$. Choose a branch of $f(z)=z^{\alpha}$ in $G$ so that $f(1)=1$. Note that $\re f'>0$ in $G$.
Therefore, the harmonic function $u(z)=\re f(z)$ satisfies $u_x>0$ in $G$. It follows that $h(z):=u(z)+i\im z$ is a harmonic
homeomorphism of $G$ onto itself. It remains to observe that
$h$ maps the domain $G\setminus [s,s+1]$ onto $G\setminus [s^\alpha,(s+1)^\alpha]$. \qed

Numerical computations with conformal moduli of Teichm\"uller rings~\cite{Ahb,AVV} show that Example~\ref{tex} does not
contradict Conjecture~\ref{genNit}.

\bibliographystyle{amsplain}

\end{document}